\documentclass[12pt,reqno]{amsart}

\setlength{\textwidth}{13.5cm}
\usepackage{amsthm}
\usepackage{amscd}
\usepackage{amsfonts}
\usepackage{amssymb}
\usepackage{amsgen}
\usepackage{amsmath}
\usepackage{amsopn}

\theoremstyle{plain}
\newtheorem{thm}{Theorem}[section]
\newtheorem{lem}[thm]{Lemma}
\newtheorem{prop}[thm]{Proposition}
\newtheorem{cor}[thm]{Corollary}

\theoremstyle{definition}

 \newcommand{\nc}{\newcommand}

 \nc{\frakP}{{\mathfrak P}}
 \nc{\Z}{{\mathbb Z}}
 \nc{\R}{{\mathbb R}}
 \nc{\N}{{\mathbb N}}
 \nc{\ZN}{{{\mathbb N}_0}}
 \nc{\Q}{{\mathbb Q}}
 \nc{\CC}{{\mathbb C}}

 \nc{\calA}{{\mathcal A}}
 \nc{\calH}{{\mathcal H}}
 \nc{\calP}{{\mathcal P}}
 \nc{\gam}{{\gamma}}
 \nc{\gG}{{\Gamma}}
 \nc{\om}{{\omega}}
 \nc{\vep}{{\varepsilon}}
 \nc{\ga}{{\alpha}}
 \nc{\gl}{{\lambda}}
 \nc{\gb}{{\beta}}
 \nc{\gd}{{\delta}}
 \nc{\bfs}{{\bf s}}
 \nc{\gs}{{\sigma}}
 \nc{\gth}{{\theta}}
 \nc{\gS}{{\Sigma}}
 \nc{\gk}{{\kappa}}
  \nc{\gz}{{\zeta}}
 \nc{\tgz}{{\tilde{\zeta}}}
 \nc{\gO}{{\Omega}}
 \nc{\sif}{{\mathcal S}}
 \nc{\gt}{{\tau}}
 \nc{\Lra}{\Longrightarrow}
 \nc{\lra}{\longrightarrow}
 \nc{\lmaps}{\longmapsto}
 \nc{\fS}{{\mathfrak S}}
 \nc{\DD}{{\mathfrak D}}
 \nc{\Llra}{\Longleftrightarrow}
 \nc{\ol}{\overline}
 \nc{\ola}{\overleftarrow}
 \nc{\lms}{\longmapsto}
 \nc{\cv}{{{\mathsf c}{\mathsf v}}}
 \nc{\zq}{{\zeta_q}}
 \nc\qup{{q\uparrow 1}}
 \nc{\us}{\underset}
 \nc{\tn}{{\tilde{n}}}
 \nc{\gD}{{\Delta}}
 \nc{\bi}{{\bf i}}
 \nc{\bfone}{{\bf 1}}
\DeclareMathOperator{\sgn}{{sgn}}
\DeclareMathOperator{\len}{{len}}
\begin{document}
\title[Super Congruences Involving AMHS]{Some Families of Supercongruences Involving Alternating Multiple Harmonic Sums}

\author{Kevin Chen, Rachael Hong, Jerry Qu, David Wang, Jianqiang Zhao}
\address{Department of Mathematics, The Bishop's School, La Jolla, CA 92037}
\date{}

\email{chenk21@mit.edu}\email{rachelhong@college.harvard.edu}\email{jerry.qu.19@bishops.com}
\email{david.wang.18@bishops.com}\email{zhaoj@ihes.fr}

\subjclass[2010]{11A07, 11B68}

\keywords{Multiple harmonic sums, finite multiple zeta values, Bernoulli numbers, supercongruences}

\maketitle
\allowdisplaybreaks

\begin{abstract}
Let $p$ be a prime. In this short note we study some families
of supercongruences involving the following alternating sums
\begin{equation*}
\sum_{\substack{j_1+j_2+\cdots+j_n=2 p^r \\ p\nmid j_1 j_2 \cdots j_n }}
\frac{(-1)^{j_1+\cdots+j_b}}{j_1\cdots j_n}    \pmod{p^r},
\end{equation*}
which extend similar statements proved by Shen and Cai who treated the
cases when $n=4,5$. Our method works for arbitrary $n$.
\end{abstract}

\section{Introduction}
Over the past quarter of a century, multiple zeta values (MZVs) and their various
generalizations have been intensively studied by many mathematicians and physicists
due to their important applications in quite a few different areas of mathematics and
theoretical physics. These values are infinite series whose finite sums are
commonly called the multiple harmonic sums, defined as follows.
Let $\N$ and $\N_0$ be the set of positive integers and nonnegative integers, respectively.
For any $n,d\in\N$ and $\bfs=(s_1,\dots,s_d)\in\N^d$, we define
the \emph{multiple harmonic sums} (MHSs) by
\begin{equation*}
\calH_n(\bfs):= \sum_{n>k_1>\cdot>k_d>0} \frac{1}{k_1^{s_1}\dots k_d^{s_d}}.
\end{equation*}
For example, $\calH_{n+1}(1)$ is often called the $n$th harmonic number.

Very recently, a finite version of MZVs has emerged which has
been conjectured to be closely related to MZVs, see \cite[Ch.~8]{Zhao2015a}.
These values are essentially the MHSs truncated
at different primes and then taken residues modulo the corresponding primes.
Such congruences were first studied independently by the last author
in \cite{Zhao2008a,Zhao2011c} and Hoffman in \cite{Hoffman2015}. In general, it is
well-known that Bernoulli numbers play a very important role in these congruences, see
\cite{Sun2000} for some classical results. As an application,
in \cite{Zhao2007b} the last author proved, by using some special properties
of the double harmonic sums, that for every odd prime $p$
\begin{equation}\label{equ:BaseCongruence}
  \sum_{\substack{i+j+k=p\\ i,j,k\ge 1}} \frac1{ijk} \equiv -2 B_{p-3} \pmod{p},
\end{equation}
where $B_k$ are Bernoulli numbers defined by the generating series
$$\frac{t}{e^t-1}=\sum_{k=0}^\infty B_k\frac{t^k}{k!}.$$
Later, Ji gave an alternative simpler proof of \eqref{equ:BaseCongruence}
in \cite{Ji} using some combinatorial techniques.
Congruence \eqref{equ:BaseCongruence} has since been generalized by either increasing
the number of indices, changing the bound from $p$ to multiples of $p$ or $p$-powers,
and/or considering the corresponding supercongruences (see \cite{CaiShJi2015,MTWZ,Wang2014b,Wang2015,Zhao2014,ZhouCa2007}),
or even allowing the alternating
version of MHSs (see \cite{ShenCai2015b,ShenCai2012b}).

Our main results of this short note concern the following type of sums.
Let $\calP_p$ be the set of positive integers not divisible by $p$. For
$m,n,r,N\in\N$, we define
\begin{alignat*}{3}
Z_n(N,p):=&\,\sum_{\substack{l_1+l_2+\dots+l_n=N\\ l_1,\dots,l_n\in \calP_p }}
    \frac{1}{l_1 l_2\dots l_n} &&\quad \text{for } p|N,     \\
R_n^{(m)}(p^r):=&\,\sum_{\substack{l_1+l_2+\dots+l_n=mp^r\\ l_1,\dots,l_n\in \calP_p }}
    \frac{1}{l_1 l_2\dots l_n}=Z_n(mp^r,p) &&\quad \text{for } p\nmid m,\\
S_n^{(m)} (p^r):=&\, \sum_{\substack{l_1+l_2+\dots+l_n=mp^r\\ p^r>l_1,\dots,l_n\in \calP_p}}
    \frac{1}{l_1 l_2\dots l_n} &&\quad \text{for } p\nmid m.
\end{alignat*}
The primary goal of our study is to find nice and simple supercongruences
involving alternating sums defined as follows:
\begin{equation*}
\gs_n^{(b)}(N,p):= \sum_{\substack{l_1+l_2+\dots+l_n=N\\ l_1,\dots,l_n\in \calP_p }}
    \frac{(-1)^{l_1+\dots+l_b}}{l_1 l_2\dots l_n} \quad \text{for } p|N.
\end{equation*}
We will reduce these congruences to those of $Z_n(N,p)$ whose special cases $R_n^{(m)}(p^r)$
are closely related to $S_n^{(m)}(p^r)$ by Proposition~\ref{prop:RmS1}.
These results are motivated by the recent work of Shen and Cai \cite{ShenCai2015b} who
studied the above sums for $n=3,4$. In Theorem~\ref{thm:ASumRm} we generalize this to
arbitrary $n$ by using $R^{(m)}_n(p^r)$ with $m=1,2$.

\textbf{Acknowledgement.} We would like to thank the anonymous referee for the careful reading of
the manuscript and helpful comments and suggestions.

\section{Some useful lemmas}
We start with a formula expressing the sums $Z_n(N,p)$ in terms
of a modified version of multiple harmonic sums.

\begin{lem}\label{lem:Zn=MHS}
Let $n,N\in\N$ and $p$ be a prime. If $p|N$ then we have
\begin{equation}\label{equ:Z2MHS}
Z_n(N,p)=\frac{n!}N\sum_{\substack{1\le u_1<\dots<u_{n-1}<N \\
u_1,u_2-u_1,\dots,u_{n-1}-u_{n-2} ,u_{n-1}\in \calP_p }} \frac{1}{u_1\dots u_{n-1}}.
\end{equation}
\end{lem}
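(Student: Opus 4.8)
The plan is to symmetrize the simplex sum defining $Z_n(N,p)$ and then reindex by partial sums. The engine is the classical partial-fraction identity
\begin{equation*}
\frac{1}{l_1l_2\cdots l_n}=\sum_{\gs\in\fS_n}\ \prod_{k=1}^{n}\frac{1}{l_{\gs(1)}+l_{\gs(2)}+\cdots+l_{\gs(k)}},
\end{equation*}
valid for any positive reals $l_1,\dots,l_n$, which I would establish first by induction on $n$. Grouping the permutations according to the last value $\gs(n)=i$, the innermost ($k=n$) factor equals $1/(l_1+\cdots+l_n)$ for every $\gs$, while the remaining $n-1$ factors run over all orderings of $\{l_j:j\neq i\}$; the inductive hypothesis identifies that inner sum with $1/\prod_{j\neq i}l_j=l_i/(l_1\cdots l_n)$, and summing $\tfrac{1}{N}\sum_{i}l_i/(l_1\cdots l_n)$ collapses to $1/(l_1\cdots l_n)$ since $\sum_i l_i=N$.

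Granting the identity, I would note that the $k=n$ factor always equals $1/N$, so
\begin{equation*}
\frac{1}{l_1\cdots l_n}=\frac{1}{N}\sum_{\gs\in\fS_n}\frac{1}{u_1(\gs)\,u_2(\gs)\cdots u_{n-1}(\gs)},\qquad u_k(\gs):=l_{\gs(1)}+\cdots+l_{\gs(k)}.
\end{equation*}
For each $\gs$ the numbers $u_1(\gs)<u_2(\gs)<\cdots<u_{n-1}(\gs)$ form a strictly increasing chain in $(0,N)$ whose successive gaps are $l_{\gs(1)},\dots,l_{\gs(n-1)}$ and whose terminal gap is $l_{\gs(n)}=N-u_{n-1}(\gs)$. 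Summing the displayed identity over all admissible tuples $(l_1,\dots,l_n)$ then reduces $Z_n(N,p)$ to $\tfrac1N$ times a sum over pairs $(\,(l_i),\gs\,)$ of $1/(u_1\cdots u_{n-1})$.

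The final step is a counting argument: I claim the map $((l_i),\gs)\mapsto(u_1(\gs),\dots,u_{n-1}(\gs))$ is exactly $n!$-to-one onto the chains indexing the right-hand side. A chain $1\le u_1<\cdots<u_{n-1}<N$ determines its gaps $d_1=u_1$, $d_k=u_k-u_{k-1}$ $(2\le k\le n-1)$, $d_n=N-u_{n-1}$, and for each of the $n!$ permutations $\gs$ there is a unique preimage tuple given by $l_{\gs(k)}=d_k$, which automatically sums to $N$. The constraint $l_1,\dots,l_n\in\calP_p$ turns into $d_1,\dots,d_n\in\calP_p$; the first $n-1$ of these are precisely the stated conditions $u_1,\,u_2-u_1,\dots,u_{n-1}-u_{n-2}\in\calP_p$, and here the hypothesis $p\mid N$ is exactly what is needed, since $d_n=N-u_{n-1}\equiv-u_{n-1}\pmod p$ forces $d_n\in\calP_p\iff u_{n-1}\in\calP_p$, which is the remaining condition. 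Collecting the factor $n!$ with the $\tfrac1N$ yields \eqref{equ:Z2MHS}. I expect the partial-fraction identity to be routine; the step demanding the most care is this last one, namely confirming that the correspondence is genuinely $n!$-to-one and that $p\mid N$ is precisely the hypothesis converting the coprimality constraint on the suppressed part $l_n$ into $u_{n-1}\in\calP_p$.
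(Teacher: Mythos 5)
Your proof is correct, and it arrives at the same reindexing by partial sums $u_k=l_{\gs(1)}+\cdots+l_{\gs(k)}$ that the paper uses, but by a genuinely different mechanism. The paper never states the symmetrization identity over $\fS_n$: it iterates a single trick, inserting $1=(l_1+\cdots+l_j)/(l_1+\cdots+l_j)$ and using the permutation symmetry of the summation \emph{domain} to replace the numerator by $j$ copies of one term, so the factors $n,n-1,\dots$ are peeled off one at a time and the substitutions $u_j=l_1+\cdots+l_j$ appear gradually; the $n!$ accumulates over the stages. You instead prove the pointwise partial-fraction identity $\tfrac{1}{l_1\cdots l_n}=\sum_{\gs\in\fS_n}\prod_{k=1}^{n}(l_{\gs(1)}+\cdots+l_{\gs(k)})^{-1}$ once and for all, pull out the common last factor $1/N$, and finish with an $n!$-to-one count of the fibers of $((l_i),\gs)\mapsto(u_1,\dots,u_{n-1})$. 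Your route isolates exactly where the hypothesis $p\mid N$ enters (converting $N-u_{n-1}\in\calP_p$ into $u_{n-1}\in\calP_p$), a point the paper leaves implicit when passing from its intermediate condition $N-l_1-\cdots-l_{n-1}\in\calP_p$ to the final $u_{n-1}\in\calP_p$; and your fiber count is sound because preimages are counted as pairs $((l_i),\gs)$, so coincidences among the gaps $d_k$ cause no overcounting. The paper's route, in exchange, avoids having to state and prove the $\fS_n$ identity and produces usable intermediate formulas for $Z_n(N,p)$ along the way. Both arguments are complete.
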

\begin{proof}
First, noting that $l_1+l_2+\dots+l_n=N$, we have
\begin{align*}
Z_n(N,p)=  &\, \frac{1}{N}\sum_{\substack{l_1+l_2+\dots+l_n=N\\ l_1,\dots,l_n\in \calP_p }}
    \frac{l_1+l_2+\dots+l_n}{l_1 l_2\dots l_n}     \\
= &\, \frac{n}{N}\sum_{\substack{l_1+\dots+l_{n-1}<N\\
l_1,\dots,l_{n-1},N-l_1-\cdots-l_{n-1}\in \calP_p}}
    \frac{1}{l_1 l_2\dots l_{n-1}}.
\end{align*}
Then one writes
\begin{equation*}
 \frac{1}{l_1 \dots l_{n-1}}=\frac{l_1 +\cdots+l_{n-1}}{l_1 \dots l_{n-1}(l_1 +\cdots+l_{n-1})}
\end{equation*}
to get
\begin{equation*}
 Z_n(N,p)=    \frac{n(n-1)}{N}\sum_{\substack{l_1+\dots+l_{n-2}<u_{n-1}<N\\
l_1,\dots,l_{n-2},u_{n-1}-l_1-\cdots-l_{n-2},N-u_{n-1}\in \calP_p}}
    \frac{1}{l_1 l_2\dots l_{n-2}u_{n-1}},
\end{equation*}
and continues in this way by using the substitutions $u_j=l_1+\dots+l_j$
for $1\le j<n$ to prove equation \eqref{equ:Z2MHS}. This completes the
proof of the lemma.
\end{proof}

\begin{lem} \label{lem:Srecurrence}
Suppose $m,n,r\in\N$ and $p$ is a prime with $p>n+1$. Then we have
\begin{equation*}
 S_n^{(m)}(p^{r+1}) \equiv (-1)^{m-1}\binom{n-2}{m-1} S_n^{(1)}(p^2) p^{r-1}   \pmod{p^{r+1}}.
\end{equation*}
\end{lem}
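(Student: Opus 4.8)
The plan is to analyze $S_n^{(m)}(p^{r+1})$ by decomposing the sum according to how many of the indices $l_1,\dots,l_n$ are divisible by $p^r$ but bounded above by $p^{r+1}$, versus those that are smaller. The key structural observation is that among the $n$ parts summing to $mp^{r+1}$, with each part in $\calP_p$ and each part less than $p^{r+1}$, we can classify the indices by their size relative to $p^r$. Writing each $l_i = a_i p^r + b_i$ is not quite the right move since the $l_i$ need not be multiples of $p^r$; instead I would separate the summation variables into a ``large'' block (those in the range $[p^r, p^{r+1})$) and a ``small'' block (those in $[1,p^r)$), and track the residual constraint that the total equals $mp^{r+1}$.

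First I would set up the recursion by conditioning on the subset of indices that are at least $p^r$. If $k$ indices are large, the remaining $n-k$ small indices contribute at most $(n-k)p^r$, while the large block must account for nearly all of $mp^{r+1}$; a counting argument should show that only configurations with a specific value of $k$ survive modulo $p^{r+1}$, the rest being killed by extra factors of $p$. For the surviving large indices, I would write $l_i = c_i p^r + d_i$ with $1 \le c_i \le m$ and reduce $1/l_i \equiv (\text{something}) \pmod{p}$ using the expansion $1/(c_i p^r + d_i)$, extracting the leading $p$-power behavior. The combinatorial factor $(-1)^{m-1}\binom{n-2}{m-1}$ should emerge from counting the number of ways to distribute the multiplier $m$ across the large indices while respecting the $\calP_p$ condition; the binomial $\binom{n-2}{m-1}$ strongly suggests a stars-and-bars count on $n$ objects after fixing two constraints (perhaps the two ``endpoint'' indices), combined with the alternating sign coming from the inclusion–exclusion on the divisibility restriction.

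The reduction to $S_n^{(1)}(p^2)$ with the factor $p^{r-1}$ indicates that each step of the induction peels off one factor of $p$, so I would aim to prove the relation for the step from $r$ to $r+1$ directly, or set up an induction on $r$ where the base case $r=1$ is tautological and the inductive step transfers the problem from $p^{r+1}$ down to $p^r$. The hypothesis $p > n+1$ is there to ensure that the small prime factors appearing in denominators (such as the $n!$ and $(n-1)!$ factors from Lemma~\ref{lem:Zn=MHS}, and the combinatorial coefficients) are invertible modulo $p$, so I would invoke this at every point where I clear such denominators.

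The hard part will be rigorously justifying which configurations of large-versus-small indices survive modulo $p^{r+1}$ and extracting the exact combinatorial coefficient $(-1)^{m-1}\binom{n-2}{m-1}$, rather than merely its order of magnitude in $p$. Establishing the leading-order contribution is routine $p$-adic bookkeeping, but pinning down the precise constant requires a careful inclusion–exclusion over the coprimality constraints $l_i \in \calP_p$, and I expect this is where the sign and the binomial arise together. I would cross-check the final coefficient against the known cases $n=4,5$ of Shen and Cai to confirm the formula before committing to the general argument.
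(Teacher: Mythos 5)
Your proposal is an outline, not a proof: every load-bearing step is explicitly deferred (``should emerge,'' ``I would aim,'' ``the hard part will be''), and it is precisely those deferred steps that constitute the lemma. Note also that the paper does not carry out this analysis from scratch either; its proof is a two-step citation. It invokes \cite[Lemma 2.3]{MTWZ}, which gives
\begin{equation*}
 S_n^{(m)}(p^{r+1}) \equiv p\sum_{a=1}^{n-1}(-1)^{m-1}\binom{n-2}{m-1}\gam_n(a)\,S_n^{(a)}(p^r) \pmod{p^{r+1}},
 \qquad \gam_n(a)=(-1)^{a+1}\tfrac{(a-1)!\,(n-1-a)!}{(n-1)!},
\end{equation*}
then uses the $m=1$ case of this same congruence to replace the weighted sum $p\sum_a \gam_n(a)S_n^{(a)}(p^r)$ by $S_n^{(1)}(p^{r+1})$, and finally applies the lifting result \cite[(1.3)]{MTWZ}, namely $S_n^{(1)}(p^{r+1})\equiv pS_n^{(1)}(p^r)\pmod{p^{r+1}}$ for $r\ge 2$, to descend to $S_n^{(1)}(p^2)p^{r-1}$. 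If you want to avoid the citation you must reprove both ingredients, and your sketch does not get there.

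Beyond incompleteness, several of your specific guesses are wrong. First, the expectation that ``only configurations with a specific value of $k$ survive'' modulo $p^{r+1}$ fails: writing $l_i=x_ip^r+y_i$ with $0\le x_i<p$, $1\le y_i<p^r$, $y_i\in\calP_p$, the constraint $\sum l_i=mp^{r+1}$ forces $\sum y_i=ap^r$ and $\sum x_i=mp-a$ for some $1\le a\le n-1$, and \emph{every} such $a$ contributes a term $\equiv\binom{mp-a+n-1}{n-1}S_n^{(a)}(p^r)$; nothing is killed, and the crux is recombining all $n-1$ levels via the $\gam_n(a)$-weighted identity (equivalently, the $m=1$ case). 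Relatedly, your digit bound $1\le c_i\le m$ is wrong --- the digits sum to $mp-a$, not $m-a$, so they range up to $p-1$. Second, the sign $(-1)^{m-1}$ and the binomial $\binom{n-2}{m-1}$ come from evaluating $\binom{mp-a+n-1}{n-1}$ modulo $p$ by a Lucas-type argument, not from inclusion--exclusion over the coprimality conditions $l_i\in\calP_p$, which are simply carried along unchanged into $S_n^{(a)}$. Third, your induction has no base: at $r=1$ the statement reads $S_n^{(m)}(p^2)\equiv(-1)^{m-1}\binom{n-2}{m-1}S_n^{(1)}(p^2)\pmod{p^2}$, which is nontrivial for every $m\ge 2$, not tautological. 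Fourth, the factor $p^{r-1}$ is not ``routine $p$-adic bookkeeping'': the exact lifting $S_n^{(1)}(p^{r+1})\equiv pS_n^{(1)}(p^r)\pmod{p^{r+1}}$ holds only for $r\ge 2$ (which is exactly why the terminal value is $S_n^{(1)}(p^2)$ rather than $S_n^{(1)}(p)$) and is itself a substantive theorem of \cite{MTWZ} that your approach would need to establish.
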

\begin{proof}
For all $n,a\in\N$, set
\begin{equation*}
\gam_n(a):=(-1)^{a+1}\frac{(a-1)!(n-1-a)!}{(n-1)!}.
\end{equation*}
By \cite[Lemma 2.3]{MTWZ}, we have
\begin{alignat*}{3}
S_n^{(m)}(p^{r+1})  &\, \equiv  p  \sum_{a=1}^{n-1} (-1)^{m-1}
            \binom{n-2}{m-1} \gam_n(a)S_n^{(a)}(p^r) & \pmod{p^{r+1}}\, \\
    &\, \equiv   (-1)^{m-1}\binom{n-2}{m-1} S_n^{(1)}(p^{r+1})   & \pmod{p^{r+1}}.
\end{alignat*}
So the lemma follows from \cite[(1.3)]{MTWZ} which says
\begin{equation*}
S_n^{(1)}(p^{r+1}) \equiv pS_n^{(1)} (p^r) \quad \pmod{p^{r+1}}
\end{equation*}
for all $r\ge 2$.
\end{proof}

\begin{prop}\label{prop:RmS1}
Let $m,n,r\in\N$ with $r\ge 2$. Then we have
\begin{alignat}{4}
R_n^{(m)}(p)\equiv&\, \sum_{a=1}^{n-1} \binom{m+n-a-1}{n-1} S_n^{(a)}(p)  & &\pmod{p}, \label{equ:Rnmr=1}\\
R_n^{(m)}(p^r)\equiv&\,   m \cdot S_n^{(1)}(p^2) p^{r-2} && \pmod{p^{r}}.\label{equ:Rnmr>1}
\end{alignat}
\end{prop}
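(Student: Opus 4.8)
The plan is to first prove a single congruence valid for every $r\ge 1$, namely
\begin{equation}
R_n^{(m)}(p^r)\equiv \sum_{a=1}^{n-1}\binom{m+n-a-1}{n-1}\,S_n^{(a)}(p^r)\pmod{p^r},
\tag{$\star$}
\end{equation}
from which \eqref{equ:Rnmr=1} is exactly the case $r=1$ and \eqref{equ:Rnmr>1} will follow after feeding in Lemma~\ref{lem:Srecurrence}. To prove $(\star)$ I would decompose each index by its residue modulo $p^r$: write $l_i=q_ip^r+s_i$ with $q_i\ge 0$ and $1\le s_i<p^r$, noting that $s_i\in\calP_p$ precisely when $l_i\in\calP_p$. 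Since $p\nmid l_is_i$, the identity $\frac1{l_i}-\frac1{s_i}=\frac{-q_ip^r}{l_is_i}$ shows $\frac1{l_i}\equiv\frac1{s_i}\pmod{p^r}$ in $\Z_{(p)}$, and multiplying the $n$ resulting congruences gives $\frac1{l_1\cdots l_n}\equiv\frac1{s_1\cdots s_n}\pmod{p^r}$, since every cross term in the expanded product carries a factor $p^r$.

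Next I would regroup the sum according to the residue tuple $(s_1,\dots,s_n)$. The constraint $l_1+\cdots+l_n=mp^r$ becomes $s_1+\cdots+s_n=a\,p^r$ with $a:=m-\sum_i q_i$; since $n\le\sum_i s_i<np^r$, only the values $1\le a\le n-1$ occur. For a fixed residue tuple with $\sum_i s_i=ap^r$, the admissible lifts correspond to $(q_1,\dots,q_n)\in\ZN^n$ with $\sum_i q_i=m-a$, of which there are $\binom{m-a+n-1}{n-1}$ (this vanishes when $a>m$, consistently with the binomial being zero there). Collecting terms and recognizing $\sum_{\sum_i s_i=ap^r,\,p^r>s_i\in\calP_p}\frac1{s_1\cdots s_n}=S_n^{(a)}(p^r)$ yields $(\star)$, and in particular establishes \eqref{equ:Rnmr=1} at once.

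For \eqref{equ:Rnmr>1} I would substitute Lemma~\ref{lem:Srecurrence}, in the form $S_n^{(a)}(p^r)\equiv(-1)^{a-1}\binom{n-2}{a-1}S_n^{(1)}(p^2)p^{r-2}\pmod{p^r}$ for $r\ge2$, into $(\star)$, thereby reducing the whole statement to the combinatorial identity
\begin{equation*}
\sum_{a=1}^{n-1}(-1)^{a-1}\binom{n-2}{a-1}\binom{m+n-a-1}{n-1}=m .
\end{equation*}
This is the step I expect to be the crux. I would prove it by generating functions: writing $\binom{m+n-a-1}{n-1}=[x^{m-a}]\,(1-x)^{-n}$ (legitimate term by term, since the upper index $m+n-a-1$ stays nonnegative over the range and both sides vanish once $a>m$), the left-hand side becomes
\begin{equation*}
[x^{m-1}]\,(1-x)^{-n}\sum_{a=1}^{n-1}(-1)^{a-1}\binom{n-2}{a-1}x^{a-1}
=[x^{m-1}]\,(1-x)^{n-2}(1-x)^{-n}=[x^{m-1}]\,(1-x)^{-2}=m .
\end{equation*}
Combining this with the substitution gives $R_n^{(m)}(p^r)\equiv m\,S_n^{(1)}(p^2)p^{r-2}\pmod{p^r}$, as claimed. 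The only points demanding care are the $p$-adic precision in the reciprocal reduction (making sure the error is genuinely $O(p^r)$ after taking the $n$-fold product) and the bookkeeping of the stars-and-bars count; the generating-function evaluation then disposes of the arithmetic cleanly.
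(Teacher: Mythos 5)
Your proof follows essentially the same route as the paper: decompose each $l_i$ by its residue modulo $p^r$, count the lifts by stars and bars to get the unified congruence $(\star)$ (which the paper also derives, noting it holds for all $r\ge 1$), then feed in Lemma~\ref{lem:Srecurrence} and evaluate $\sum_{a}(-1)^{a-1}\binom{n-2}{a-1}\binom{m+n-a-1}{n-1}=m$. The only cosmetic difference is that you verify this last identity by generating functions while the paper invokes the Chu--Vandermonde identity; both are correct.
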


\begin{proof}
Let $p$ be a prime number such that $p>n+1$.
For any $n$-tuples $(l_1,\cdots ,l_n)$ of integers in $\calP_{p}$ satisfying $l_1+\cdots +l_n=mp^r$,
we rewrite
\begin{equation*}
l_i=x_ip^r+y_i, \quad x_i \ge 0, \quad 1\le y_i<p^r, \quad y_i\in \calP_p, \quad 1 \le i \le n.
\end{equation*}
Since
\begin{equation*}
\Big(\sum_{i=1}^n x_i \Big)p^r+\sum_{i=1}^{n}{y_i}=mp^r,
\end{equation*}
there exists $1\le a<n$ such that
\begin{equation*}
\left\{ \begin{array}{ll}
 x_1+\cdots +x_n=m-a, \\
 y_1+\cdots +y_n=ap^{r}. \\
\end{array} \right.
\end{equation*}
For $1 \le a <n$, the equation $x_1+\cdots + x_n=m-a$ has $\binom {m+n-a-1}{n-1}$
nonnegative integer solutions. Hence, for all $r\ge 1$,
\begin{alignat*}{3}
R_n^{(m)}(p^r)\,& =\sum_{\substack{l_1+\cdots +l_n=mp^r  \\
 l_1,\cdots ,l_n\in \calP_p}}  \frac{1}{l_1l_2\cdots l_n}& \ \\
& =\sum_{a=1}^{n-1} \   \sum_{\substack{ x_1+\cdots +x_n=m-a\\
 y_1+\cdots +y_n=ap^r \\
 y_i\in \calP_p,y_i<p^r}}  \frac{1}{(x_1p^r+y_1)\cdots (x_np^r+y_n)} & \ \\
&\equiv \sum_{a=1}^{n-1} \binom{m+n-a-1}{n-1} S_n^{(a)}(p^r)  & & \pmod{p^r}  \\
&\equiv \sum_{a=1}^{n-1} \binom{m+n-a-1}{n-1}(-1)^{a-1}\binom{n-2}{a-1} S_n^{(1)}(p^2) p^{r-2} &&  \pmod{p^r},
\end{alignat*}
by Lemma \ref{lem:Srecurrence}. Note that the penultimate step holds for $r=1$ which
implies \eqref{equ:Rnmr=1}. However, the last step is valid only when $r\ge 2$.
So \eqref{equ:Rnmr>1} follows immediately from
\begin{align*}
&\, \sum_{a=1}^{n-1} (-1)^{a-1}\binom{n-2}{a-1} \binom{m+n-a-1}{n-1} \\
=&\, \sum_{a=1}^{n-1}  \binom{a+1-n}{a-1} \binom{m+n-a-1}{m-a} =m
\end{align*}
by the famous Chu--Vandermonde identity.
\end{proof}

\section{Alternating sums}
We now define the alternating version of the multiple harmonic sums. For convenience,
we denote by $\bar{s}$ a \emph{signed integer} for every $s\in\N$ and set $|\bar{s}|=s$
and $\sgn(\bar{s})=-1$. Let $s_j$ be either a positive integer
or a signed integer for all $j=1,\dots,d$. For any $n\in\N$, the alternating MHS is defined by
\begin{equation*}
\calH_n(s_1,\dots,s_d):= \sum_{n>k_1>\cdot>k_d>0} \frac{\sgn(s_1)^{k_1}\cdots \sgn(s_d)^{k_d}}
{k_1^{|s_1|}\dots k_d^{|s_d|}}.
\end{equation*}
For example, $\lim_{n\to \infty}\calH_n(\bar 1)$ is just the well-known
alternating harmonic series.

As variations of alternating MHSs, we have defined that
\begin{equation*}
\gs_n^{(b)}(N,p)= \sum_{\substack{l_1+l_2+\dots+l_n=N\\ l_1,\dots,l_n\in \calP_p }}
    \frac{(-1)^{l_1+\dots+l_b}}{l_1 l_2\dots l_n} \quad \text{for } p|N.
\end{equation*}
In this section, for each fixed $n\ge 4$, we will study some suitable linear combinations of
$\gs_n^{(b)}(N,p)$ for $b=1,\dots,n-1$. To this end, for any
$a\ge b\ge 0$, $d\ge 0$ and $\bfs=(s_1,\dots,s_d)\in \{1,\bar 1\}^d$, we define
\begin{align*}
F_a^{(b)} (\bfs, N,p):=&\, \sum_{\substack{N>i_1>\cdots>i_d>l_1+\dots+l_a,\ l_1,\dots,l_a\in \calP_p  \\
i_1,i_1-i_2,\dots,i_{d-1}-i_d, i_d-(l_1+\dots+l_a)\in \calP_p }} \hskip-2pt
    \frac{\sgn(s_1)^{i_1}\cdots \sgn(s_d)^{i_d}(-1)^{l_1+\dots+l_b}}{i_1 \cdots i_d l_1  \dots l_a}.
\end{align*}
Then it is easy to see that if $N$ is even then
\begin{align}\label{equ:gsDefn}
N\gs_n^{(b)}(N,p)=&\, (n-b)F_{n-1}^{(b)} (\emptyset, N,p)+bF_{n-1}^{(n-b)} (\emptyset, N,p),\\
F_a^{(b)}(\bfs,N,p)=&\, (a-b)F_{a-1}^{(b)} ((\bfs,1), N,p)+bF_{a-1}^{(a-b)} ((\bfs,\bar 1),N,p), \quad a\ge 1.
\label{equ:Frecursion}
\end{align}
Here, we have abused the notation by writing $(\bfs,1)=(s_1,\dots,s_d,1)$ and $(\bfs,\bar 1)=(s_1,\dots,s_d,\bar 1)$.
For $m\in\N_0$ and $n\in\N$, put
\begin{equation*}
    X_m:= (1_m),\quad Z_n:= (\{\bar 1\} , 1_{n-1}).
\end{equation*}
For $\bfs=(X_{w_1},Z_{w_2},\dots,Z_{w_l})$, we set $W_\bfs:=(w_1,w_2,\dots,w_l)$,
$\len(W_\bfs):=l$ and
\begin{equation}\label{equ:emptybfs}
    A_\emptyset:= 0,\ B_\emptyset:= 0,\ W_\emptyset:=(0),\
   P_\emptyset:= b.
\end{equation}
Otherwise, for $\bfs\ne \emptyset$, we define
\begin{align*}
   A_\bfs:= \sum_{2 \nmid i} w_i, \quad B_\bfs:= \sum_{2 \vert i} w_i, \quad
      P_\bfs:=  &
\left\{
  \begin{array}{ll}
    a-b-A_\bfs & \hbox{if $2\vert \len(W_\bfs)$;} \\
    b-B_\bfs & \hbox{if $2\nmid \len(W_\bfs)$.}
  \end{array}
\right.
\end{align*}
Finally, for all fixed $a\ge b\ge 0$ and $A,B\ge 0$, we put
\begin{align*}
   C_{a,b}(A, B)=C(A, B) := &
\left\{
  \begin{array}{ll}
    1 & \hbox{if $A,B=0$;} \\
    (b)_{B} & \hbox{if $A=0,B>0$;} \\
    (a-b)_{A} & \hbox{if $A>0,B=0$;} \\
    (a-b)_{A}(b)_{B}  & \hbox{if $A>0,B>0$,}
  \end{array}
\right.
\end{align*}
where $(x)_\ga=x(x-1)\cdots(x-\ga+1)$ is the Pochhammer symbol for the falling factorial.

\begin{lem} \label{lem:recursion}
Let $a,b\in \N_0$. Then for any fixed nonnegative integer $d\le a$,
\begin{equation*}
    F_a^{(b)}(\emptyset,N,p) = \sum_{\bfs\in\{1,\bar1\}^d} C_{a,b}(A_\bfs, B_\bfs)F_{a-A_\bfs-B_\bfs}^{(P_\bfs)}(\bfs,N,p).
\end{equation*}
\end{lem}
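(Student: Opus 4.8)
The plan is to prove the identity by induction on $d$, using the one-step recursion \eqref{equ:Frecursion} as the inductive engine. The base case $d=0$ is immediate: the index set $\{1,\bar1\}^0$ contains only $\bfs=\emptyset$, and by the conventions \eqref{equ:emptybfs} together with $C_{a,b}(0,0)=1$ the right-hand side collapses to $1\cdot F_a^{(b)}(\emptyset,N,p)$. For the inductive step from $d$ to $d+1$ (valid whenever $d+1\le a$), I would take the formula at level $d$ and apply \eqref{equ:Frecursion} to each summand $F_{a-d}^{(P_\bfs)}(\bfs,N,p)$, using that $a-A_\bfs-B_\bfs=a-d$ because $\bfs$ has exactly $d$ entries, so $A_\bfs+B_\bfs=d$. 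Each summand splits into a term indexed by $(\bfs,1)$ and one indexed by $(\bfs,\bar1)$; as $\bfs$ ranges over $\{1,\bar1\}^d$ these pairs range bijectively over $\{1,\bar1\}^{d+1}$. Thus it suffices to match, for each $\bfs$ and each appended symbol, the subscript, superscript and coefficient produced by \eqref{equ:Frecursion} with the quantities $a-A-B$, $P$ and $C_{a,b}(A,B)$ prescribed by the right-hand side at level $d+1$.

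The heart of the matter is a bookkeeping step describing how the block data $(W_\bfs,A_\bfs,B_\bfs,P_\bfs)$ changes under appending a symbol. Appending a $1$ lengthens the final block and leaves $\len(W_\bfs)$ unchanged, whereas appending a $\bar1$ opens a new block and raises $\len(W_\bfs)$ by one. Splitting according to the parity of $\len(W_\bfs)$, one sees that appending $1$ increments $A_\bfs$ when $\len(W_\bfs)$ is odd and $B_\bfs$ when it is even, while appending $\bar1$ does the opposite; in every case $A+B$ rises by one, so the subscript $a-(A+B)=a-d-1$ matches automatically. For the superscript I would feed the piecewise formula for $P_\bfs$ into \eqref{equ:Frecursion}: on the ``$1$''-branch the new superscript is $P_\bfs$, and on the ``$\bar1$''-branch it is $(a-d)-P_\bfs$. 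For example, when $\len(W_\bfs)$ is odd one has $P_\bfs=b-B_\bfs$, so on the $\bar1$-branch $(a-d)-P_\bfs=(a-A_\bfs-B_\bfs)-(b-B_\bfs)=(a-b)-A_\bfs$, which is exactly $P_{(\bfs,\bar1)}$ since appending $\bar1$ makes $\len$ even and leaves $A$ fixed. The three remaining cases are checked the same way.

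It then remains to reconcile the coefficients. Writing $C_{a,b}(A,B)=(a-b)_A(b)_B$ uniformly — consistent with all four cases of the definition once $(x)_0=1$ is used — each required identity reduces to the single Pochhammer step $(x)_{k+1}=(x)_k(x-k)$. Indeed, a short computation shows that on whichever branch increments $A_\bfs$ the recursion coefficient equals $(a-b)-A_\bfs$, giving $(a-b)_{A_\bfs}\bigl((a-b)-A_\bfs\bigr)=(a-b)_{A_\bfs+1}$, and on whichever branch increments $B_\bfs$ the coefficient equals $b-B_\bfs$, giving $(b)_{B_\bfs}(b-B_\bfs)=(b)_{B_\bfs+1}$; in both cases the untouched Pochhammer factor is carried along unchanged. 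Multiplying $C_{a,b}(A_\bfs,B_\bfs)$ by these coefficients therefore yields exactly $C_{a,b}(A_{(\bfs,\cdot)},B_{(\bfs,\cdot)})$, and assembling the matched subscripts, superscripts and coefficients gives precisely the level-$(d+1)$ instance of the claim, completing the induction.

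I expect the main obstacle to be the parity-driven case analysis in the second paragraph: one must correctly link the purely algebraic superscript update in \eqref{equ:Frecursion} to the piecewise formula for $P_\bfs$ while simultaneously confirming that the appropriate one of $A_\bfs,B_\bfs$ is incremented, and keep the degenerate configurations — an empty initial block $w_1=0$, or $A_\bfs=0$, or $B_\bfs=0$ — consistent with the conventions in \eqref{equ:emptybfs}. Once the four cases are tabulated carefully, the coefficient identities become routine applications of the falling-factorial recursion.
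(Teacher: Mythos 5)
Your proposal is correct and follows essentially the same route as the paper: induction on $d$ via the one-step recursion \eqref{equ:Frecursion}, a parity-of-$\len(W_\bfs)$ case analysis tracking how $(A_\bfs,B_\bfs,P_\bfs)$ update under appending $1$ or $\bar1$, and the falling-factorial step to absorb the recursion coefficients into $C_{a,b}$. No substantive differences.
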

\begin{proof}
We will prove this by induction on $d$. If $d=0$ then there is only one term in the
sum corresponding to $\bfs=\emptyset$. Then the lemma holds by \eqref{equ:emptybfs}.
Now let $d\ge 1$ and suppose the lemma is true when $d$ is replaced by $d-1$. Observe that
any composition in $\{1,\bar1\}^d$ is produced by either $(\bfs,1)$ or $(\bfs,\bar1)$
for a unique $\bfs\in\{1,\bar1\}^{d-1}$. Further, it is easy to see that
\begin{align*}
   (A_{(\bfs,1)},B_{(\bfs,1)}) = &
\left\{
  \begin{array}{ll}
    (A_\bfs+1,B_\bfs) & \hbox{if $2 \nmid \len(W_\bfs)$;} \\
    (A_\bfs,B_\bfs+1) & \hbox{if $2 \vert \len(W_\bfs)$,}
  \end{array}
\right.\\
   (A_{(\bfs,\bar 1)},B_{(\bfs,\bar 1)}) = &
\left\{
  \begin{array}{ll}
    (A_\bfs,B_\bfs+1) & \hbox{if $2 \nmid \len(W_\bfs)$;} \\
    (A_\bfs+1,B_\bfs) & \hbox{if $2 \vert \len(W_\bfs)$.}
  \end{array}
\right.
\end{align*}
If $d<a$ and $2 \nmid \len(W_\bfs)$, then by \eqref{equ:Frecursion}
\begin{align*}
 &\,  C(A_\bfs, B_\bfs)F_{a-A_\bfs-B_\bfs}^{(b-B_\bfs)}(\bfs,N,p) \\
=&\,
C(A_\bfs, B_\bfs)\left[(a-b-A_\bfs)F_{a-A_\bfs-B_\bfs-1}^{(b-B_\bfs)}((\bfs,1),N,p)\right. \\
 &\,   +\left. (b-B_\bfs)F_{a-A_\bfs-B_\bfs-1}^{(a-b-A_\bfs)}((\bfs,\bar 1),N,p)\right] \\
=&\,
C(A_\bfs+1,B_\bfs)F_{a-(A_\bfs+1)-B_\bfs}^{(P_{(\bfs,1)})}((\bfs,1),N,P) \\
 &\,   +C(A_\bfs,B_\bfs+1)F_{a-A_\bfs-(B_\bfs+1)}^{(P_{(\bfs,\bar 1)})}((\bfs,\bar 1),N,p) \\
=&\,
C(A_{(\bfs,1)},B_{(\bfs,1)})F_{a-A_{(\bfs,1)}-B_{(\bfs,1)}}^{(P_{(\bfs,1)})}((\bfs,1),N,P) \\
  &\,  +C(A_{(\bfs,\bar 1)},B_{(\bfs,\bar 1)})F_{a-A_{(\bfs,\bar 1)}-B_{(\bfs,\bar 1)}}^{(P_{(\bfs,\bar 1)})}((\bfs,\bar 1),N,p).
\end{align*}
If $d<a$ and  $2 \vert \len(W_\bfs)$, then by \eqref{equ:Frecursion} again
\begin{align*}
  &\,  C(A_\bfs, B_\bfs)F_{a-A_\bfs-B_\bfs}^{(a-b-A_\bfs)}(\bfs,N,p) \\
=&\,
    C(A_\bfs, B_\bfs)\left[(b-B_\bfs)F_{a-A_\bfs-B_\bfs-1}^{(a-b-A_\bfs)}((\bfs,1),N,p)\right.\\
  &\,  +\left. (a-b-A_\bfs)F_{a-A_\bfs-B_\bfs-1}^{(b-B_\bfs)}((\bfs,\bar 1),N,p)\right] \\
=&\,C(A_\bfs,B_\bfs+1)F_{a-A_\bfs-(B_\bfs+1)}^{(P_{(\bfs,1)})}((\bfs,1),N,P)\\
  &\,  +C(A_\bfs+1,B_\bfs)F_{a-(A_\bfs+1)-B_\bfs}^{(P_{(\bfs,\bar 1)})}((\bfs,\bar 1),N,p)\\
=&\,C(A_{(\bfs,1)},B_{(\bfs,1)})F_{a-A_{(\bfs,1)}-B_{(\bfs,1)}}^{(P_{(\bfs,1)})}((\bfs,1),N,P)\\
  &\,  +C(A_{(\bfs,\bar 1)},B_{(\bfs,\bar 1)})F_{a-A_{(\bfs,\bar 1)}-B_{(\bfs,\bar 1)}}^{(P_{(\bfs,\bar 1)})}((\bfs,\bar 1),N,p).
\end{align*}
This finishes the induction proof of the lemma.
\end{proof}

\begin{cor}\label{cor:coeff}
Let $a,b\in\N$ with $a\ge b$. For all $\bfs\in\{1,\bar1\}^a$, we have
\begin{align*}
   C_{a,b}(A_\bfs,B_\bfs) = &
\left\{
  \begin{array}{ll}
    (a-b)!b! & \hbox{if $A_\bfs=a-b,B_\bfs=b$;} \\
    0 & \hbox{if $A_\bfs\ne a-b,B_\bfs\ne b$.}
  \end{array}
\right.
\end{align*}
\end{cor}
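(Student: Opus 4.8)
The plan is to reduce everything to the single identity $A_\bfs + B_\bfs = a$, valid for every $\bfs \in \{1,\bar1\}^a$, together with the vanishing behaviour of the falling factorial. First I would establish this identity: writing $\bfs = (X_{w_1}, Z_{w_2}, \dots, Z_{w_l})$ in its canonical decomposition, each of the blocks $X_{w_1}, Z_{w_2}, \dots, Z_{w_l}$ contributes exactly $w_i$ entries to $\bfs$, so the total length is $w_1 + w_2 + \dots + w_l = A_\bfs + B_\bfs$, which must equal $a$. In particular $A_\bfs = a - b$ holds if and only if $B_\bfs = b$, so the two-way split in the statement is genuinely a single alternative, $B_\bfs = b$ versus $B_\bfs \ne b$.

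Next I would record the elementary fact that for a nonnegative integer $x$ one has $(x)_\ga = 0$ if and only if $\ga > x$, since exactly then does the factor $x - x = 0$ appear in $(x)_\ga = x(x-1)\cdots(x-\ga+1)$. Read against the definition of $C_{a,b}$, the upshot is that $C_{a,b}(A,B)$ contains the factor $(a-b)_A$ whenever $A > 0$ and the factor $(b)_B$ whenever $B > 0$.

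With these two ingredients the corollary becomes a short case check. If $B_\bfs = b$, hence $A_\bfs = a - b$, I would simply read off the appropriate branch of $C_{a,b}$ and use $(x)_x = x!$ together with the convention $0! = 1$ (to cover the degenerate possibility $a = b$) to obtain $C_{a,b}(a-b,b) = (a-b)!\,b!$. If instead $B_\bfs \ne b$, then since $A_\bfs + B_\bfs = a$ either $B_\bfs > b$ or $B_\bfs < b$. In the former case $B_\bfs > b \ge 0$ gives $(b)_{B_\bfs} = 0$, and as $B_\bfs > 0$ this factor really occurs in $C_{a,b}$, so $C_{a,b}(A_\bfs, B_\bfs) = 0$; in the latter case $A_\bfs = a - B_\bfs > a - b \ge 0$ gives $A_\bfs > 0$ and $(a-b)_{A_\bfs} = 0$, again forcing $C_{a,b}(A_\bfs, B_\bfs) = 0$.

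The main obstacle, though a mild one, is bookkeeping: the piecewise definition of $C_{a,b}$ treats $A = 0$ and $B = 0$ separately, so in each branch I must verify that the Pochhammer factor I am invoking is actually present, that is, that the corresponding index is strictly positive rather than landing in the ``$=0$'' branch where $C_{a,b}$ is $1$ or reduces to a single factor. The identity $A_\bfs + B_\bfs = a$ combined with the hypothesis $a \ge b \ge 1$ rules out the problematic combinations in every case, so no additional argument is required.
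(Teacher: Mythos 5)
Your proposal is correct and follows essentially the same route as the paper: both rest on the identity $A_\bfs+B_\bfs=a$ together with the fact that $(x)_\ga$ vanishes exactly when $\ga>x$, which forces $A_\bfs=a-b$ and $B_\bfs=b$ whenever $C_{a,b}(A_\bfs,B_\bfs)\ne 0$. Your version merely organizes this as an explicit case split on $B_\bfs$ versus $b$ (and is a bit more careful about the degenerate branches of the definition of $C_{a,b}$), whereas the paper argues directly from nonvanishing; the content is the same.
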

\begin{proof}
It is easy to see that $A_\bfs+B_\bfs=|W_\bfs|=|\bfs|=a$. If $C_{a,b}(A_\bfs,B_\bfs)\ne 0$, then by its definition
\begin{equation*}
    a-b-A_\bfs+1>0,     b-B_\bfs+1=-(a-b-A_\bfs)+1>0,
\end{equation*}
which imply that $A_\bfs = a-b, B_\bfs = b$ and $C_{a,b}(A_\bfs,B_\bfs) = (a-b)!b!$.
\end{proof}

\begin{cor} \label{cor:Fsum}
For all fixed $a\in\N$, we have
\begin{align*}
    \sum_{b=0}^a\binom{a}{b}F_a^{(b)}(\emptyset,2N,p)=\frac{N}{a+1}Z_{a+1}(N,p).
\end{align*}
\end{cor}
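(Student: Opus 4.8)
The plan is to unfold the $d=0$ case of $F_a^{(b)}$, use the symmetry of the summation region to replace the single alternating block by an average over all $b$-element subsets, and then collapse the signs through a product formula. First I would note that for $d=0$ the chain $N>i_1>\cdots>i_d$ degenerates (with $i_0:=2N$) so that
\[
F_a^{(b)}(\emptyset, 2N, p) = \sum_{(l_1,\dots,l_a)\in\mathcal{C}} \frac{(-1)^{l_1+\cdots+l_b}}{l_1\cdots l_a},
\]
where $\mathcal{C}$ is the set of $a$-tuples with $l_1+\cdots+l_a<2N$, every $l_i\in\calP_p$, and $2N-(l_1+\cdots+l_a)\in\calP_p$. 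Both the weight $1/(l_1\cdots l_a)$ and the region $\mathcal{C}$ are symmetric under permuting the coordinates, so for each $b$-subset $S\subseteq\{1,\dots,a\}$ the sum $\sum_{\mathcal{C}}(-1)^{\sum_{i\in S}l_i}/(l_1\cdots l_a)$ has the same value as the displayed sum.

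Averaging over the $a!$ coordinate permutations, each $b$-subset occurs $b!(a-b)!$ times, which gives
\[
\binom{a}{b}F_a^{(b)}(\emptyset,2N,p)=\sum_{\substack{S\subseteq\{1,\dots,a\}\\ |S|=b}}\ \sum_{(l_1,\dots,l_a)\in\mathcal{C}}\frac{(-1)^{\sum_{i\in S}l_i}}{l_1\cdots l_a}.
\]
Summing over $b$ and interchanging the order of summation then lets me assemble the inner subset sum into a product:
\[
\sum_{b=0}^a\binom{a}{b}F_a^{(b)}(\emptyset,2N,p)=\sum_{(l_1,\dots,l_a)\in\mathcal{C}}\frac{1}{l_1\cdots l_a}\prod_{i=1}^a\bigl(1+(-1)^{l_i}\bigr),
\]
using the identity $\sum_{S\subseteq\{1,\dots,a\}}(-1)^{\sum_{i\in S}l_i}=\prod_{i=1}^a(1+(-1)^{l_i})$. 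Each factor is $2$ when $l_i$ is even and $0$ otherwise, so only tuples with all $l_i$ even survive, each weighted by $2^a$.

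For the last step I would substitute $l_i=2m_i$. Since $p$ is odd, $l_i\in\calP_p$ is equivalent to $m_i\in\calP_p$; moreover $l_1+\cdots+l_a<2N$ becomes $m_1+\cdots+m_a<N$ and $2N-(l_1+\cdots+l_a)\in\calP_p$ becomes $N-(m_1+\cdots+m_a)\in\calP_p$, while the $2^a$ cancels against $1/(2^a m_1\cdots m_a)$. The left-hand side thus reduces to
\[
\sum_{\substack{m_1+\cdots+m_a<N\\ m_1,\dots,m_a,\,N-m_1-\cdots-m_a\in\calP_p}}\frac{1}{m_1\cdots m_a},
\]
which is precisely the intermediate expression for $\frac{N}{a+1}Z_{a+1}(N,p)$ that appears (with $n=a+1$) in the proof of Lemma~\ref{lem:Zn=MHS}. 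Comparing the two sides finishes the proof.

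I expect the main obstacle to be the symmetry/averaging step: one has to check that $\mathcal{C}$ together with the weight is genuinely invariant under permutations so that relabeling is legitimate, and that the binomial coefficient $\binom{a}{b}$ is exactly absorbed by the $b!(a-b)!$ subsets. The only place the specific bound $2N$ (rather than a general multiple of $p$) matters is the final substitution, where oddness of $p$ is needed to preserve $\calP_p$ under $l_i\mapsto 2m_i$.
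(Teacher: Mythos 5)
Your proof is correct, but it reaches the key cancellation by a genuinely different route than the paper. The paper first applies Lemma~\ref{lem:recursion} with $d=a$ together with Corollary~\ref{cor:coeff} to rewrite $\sum_{b}\binom{a}{b}F_a^{(b)}(\emptyset,2N,p)$ as $a!\sum_{\bfs\in\{1,\bar1\}^a}F_0^{(0)}(\bfs,2N,p)$, i.e.\ it converts everything into nested sums over $2N>i_1>\dots>i_a>0$ with consecutive differences in $\calP_p$, and only then observes that summing over all sign patterns $\bfs$ produces the factor $(1+(-1)^{i_1})\cdots(1+(-1)^{i_a})$. You instead stay with the flat variables $l_1,\dots,l_a$: the defining region of $F_a^{(b)}(\emptyset,2N,p)$ and the weight $1/(l_1\cdots l_a)$ are invariant under permuting coordinates, so the subset sum depends only on $|S|$, the $\binom{a}{b}$ subsets of size $b$ exactly absorb the binomial coefficient, and the elementary identity $\sum_{S}(-1)^{\sum_{i\in S}l_i}=\prod_{i}(1+(-1)^{l_i})$ does the rest. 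This bypasses Lemma~\ref{lem:recursion} and Corollary~\ref{cor:coeff} entirely (which the paper develops essentially for this purpose), at the modest cost of quoting the intermediate display in the proof of Lemma~\ref{lem:Zn=MHS} rather than its statement \eqref{equ:Z2MHS}. Both arguments share the same endgame: only all-even tuples survive, the substitution $l_i=2m_i$ (legitimate because $p$ is odd, an assumption both proofs use tacitly) halves the bound from $2N$ to $N$, and Lemma~\ref{lem:Zn=MHS} with $n=a+1$ yields $\frac{N}{a+1}Z_{a+1}(N,p)$. Your reading of the degenerate $d=0$ case of the definition of $F_a^{(b)}$, with the implicit condition $2N-(l_1+\cdots+l_a)\in\calP_p$, is the one forced by \eqref{equ:gsDefn}, so the starting point is also sound.
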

\begin{proof}
By Corollary \ref{cor:coeff},
$C(A_\bfs,B_\bfs)\ne 0$ for one and only one $b$ for every $\bfs\in\{1,\bar1\}^a$. Thus,
\begin{align*}
   &\,  \sum_{b=0}^a\binom{a}{b}F_a^{(b)}(\emptyset,2N,p)=\sum_{\bfs\in\{1,\bar1\}^a} a!F_0^{(0)}(\bfs,2N,p) \\
    =&\, a!\sum_{\substack{2N>i_1>\dots>i_a>0 \\ i_1,i_1-i_2,\dots,i_{a-1}-i_a,i_a\in \calP_p}}\frac{(1+(-1)^{i_1})\dots(1+(-1)^{i_a})}{i_1\dots i_a}
\end{align*}
As the term is nonzero only when all indices are even, we get
\begin{equation*}
   \sum_{b=0}^a\binom{a}{b}F_a^{(b)}(\emptyset,2N,p)= a!\sum_{\substack{N>i_1>\dots>i_a>0 \\ i_1,i_1-i_2,\dots,i_{a-1}-i_a,i_a\in  \calP_p}}\frac{1}{i_1\dots i_a}.
\end{equation*}
We can now finish the proof of the corollary by applying Lemma \ref{lem:Zn=MHS}.
\end{proof}

\begin{thm}\label{thm:ASumRm}
Let $n,N$ be two positive integers and $p$ a prime. If $p|N$ then we have
\begin{equation*}
\sum_{b=1}^{\lfloor n/2 \rfloor}  \ga_{n,b} \binom{n}{b}
\gs_n^{(b)}(2N,p) = \frac12Z_n(N,p)-Z_n(2N,p),
\end{equation*}
where $\ga_{n,b}=1$ except for $\ga_{n,n/2}=1/2$ when $n$ is even. In particular, for
every $r\in\N$ and prime $p$ we have
\begin{align*}
\sum_{b=1}^{\lfloor n/2 \rfloor}  \ga_{n,b} \binom{n}{b}
\gs_n^{(b)}(2p^r,p) = &\, \frac12 R^{(1)}_n(p^r)-R^{(2)}_n(p^r) \\
\equiv&\,  -\frac32 S_n^{(1)}(p^2) p^{r-1} \pmod{p^r}.
\end{align*}
\end{thm}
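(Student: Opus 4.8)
The plan is to prove the identity first and then derive the congruences as a corollary of the earlier propositions. The central observation is that the sign $(-1)^{l_1+\dots+l_b}$ in $\gs_n^{(b)}(2N,p)$ is exactly the kind of alternating factor produced by the recursion \eqref{equ:gsDefn}--\eqref{equ:Frecursion} that governs the functions $F_a^{(b)}$. So I would start from the generating identity in Corollary~\ref{cor:Fsum}, namely
\begin{equation*}
\sum_{b=0}^{n-1}\binom{n-1}{b}F_{n-1}^{(b)}(\emptyset,2N,p)=\frac{N}{n}Z_n(N,p),
\end{equation*}
applied with $a=n-1$, which already has a clean $Z_n(N,p)$ on the right. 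The remaining task is to relate the left-hand side of this corollary to the combination $\sum_b \ga_{n,b}\binom nb \gs_n^{(b)}(2N,p)$ appearing in the theorem, using \eqref{equ:gsDefn} to turn $2N\gs_n^{(b)}(2N,p)$ into $(n-b)F_{n-1}^{(b)}(\emptyset,2N,p)+bF_{n-1}^{(n-b)}(\emptyset,2N,p)$.

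Next I would carry out the bookkeeping that ties the two weightings together. The key algebraic fact is the symmetry coming from $\binom{n-1}{b}(n-b)=\binom nb\,b'$-type identities: since $\binom{n}{b}=\binom{n-1}{b-1}+\binom{n-1}{b}$ and $(n-b)\binom{n-1}{b}=n\binom{n-1}{b}-b\binom{n-1}{b}$, one finds that $\binom{n}{b}=\frac{n}{n-b}\binom{n-1}{b}$, so that $(n-b)F_{n-1}^{(b)}$ and $bF_{n-1}^{(n-b)}$ reassemble, after multiplying by $\binom nb$ and summing over $b$, into $n\sum_{b=0}^{n-1}\binom{n-1}{b}F_{n-1}^{(b)}$. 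Combining this with \eqref{equ:gsDefn} gives $2N\sum_{b=1}^{n-1}\binom nb\gs_n^{(b)}(2N,p)$ in terms of the corollary's left-hand side, hence in terms of $Z_n(N,p)$. To pass from the full range $b=1,\dots,n-1$ to $b=1,\dots,\lfloor n/2\rfloor$ with the correction factor $\ga_{n,b}$, I would use the symmetry $\gs_n^{(b)}(2N,p)=\gs_n^{(n-b)}(2N,p)$ (which holds because relabelling $l_1,\dots,l_n$ by the complementary block leaves the summand invariant modulo the overall sign, using that $2N$ is even so $(-1)^{l_1+\dots+l_b}=(-1)^{l_{b+1}+\dots+l_n}$), pairing up the $b$ and $n-b$ terms and halving the diagonal term when $n$ is even. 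I also need to separate out the $b=0$ and $b=n$ contributions, which produce an unsigned copy that assembles into $2^{?}$-free full sum $Z_n(2N,p)$; keeping careful track of these boundary terms is what yields the $-Z_n(2N,p)$ on the right.

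For the second, congruence, half of the statement I would simply specialize $N=p^r$ (so $p\mid N$ is automatic), recognize $Z_n(p^r,p)=R_n^{(1)}(p^r)$ and $Z_n(2p^r,p)=R_n^{(2)}(p^r)$ by the definitions of $R_n^{(m)}$, and then invoke Proposition~\ref{prop:RmS1}. Concretely, \eqref{equ:Rnmr>1} gives $R_n^{(1)}(p^r)\equiv S_n^{(1)}(p^2)p^{r-2}$ and $R_n^{(2)}(p^r)\equiv 2S_n^{(1)}(p^2)p^{r-2}$ modulo $p^r$, so $\frac12 R_n^{(1)}(p^r)-R_n^{(2)}(p^r)\equiv(\frac12-2)S_n^{(1)}(p^2)p^{r-2}p=-\frac32 S_n^{(1)}(p^2)p^{r-1}$; I would double-check the power of $p$ here, since \eqref{equ:Rnmr>1} requires $r\ge2$ and the case $r=1$ must be handled via \eqref{equ:Rnmr=1} or noted separately.

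The main obstacle I anticipate is the combinatorial bookkeeping in the middle paragraph: correctly matching the weights $\ga_{n,b}\binom nb$ against the binomial weights $\binom{n-1}{b}$ in Corollary~\ref{cor:Fsum}, and in particular accounting for the boundary terms $b=0,n$ that are present in the corollary's sum but excluded from the definition of $\gs_n^{(b)}$, so that they correctly recombine to give the clean $\frac12 Z_n(N,p)-Z_n(2N,p)$ rather than some stray multiple. Verifying the symmetry $\gs_n^{(b)}=\gs_n^{(n-b)}$ and the resulting factor of $\frac12$ on the diagonal for even $n$ is delicate but should follow from the evenness of $2N$.
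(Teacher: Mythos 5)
Your plan reproduces the paper's proof essentially step for step: the symmetry $\gs_n^{(b)}(2N,p)=\gs_n^{(n-b)}(2N,p)$ (valid because $2N$ is even) unfolds the sum to $b=0,\dots,n$, then \eqref{equ:gsDefn} together with the substitution $b\to n-b$ and the identity $(n-b)\binom{n}{b}=n\binom{n-1}{b}$ reduces everything to Corollary~\ref{cor:Fsum} with $a=n-1$, the boundary term $\gs_n^{(0)}(2N,p)=Z_n(2N,p)$ supplies the $-Z_n(2N,p)$, and the congruence follows from Proposition~\ref{prop:RmS1} with $N=p^r$, exactly as in the paper. Your flagged doubt about the power of $p$ is well founded and is the one place your algebra slips: \eqref{equ:Rnmr>1} gives $\frac12 R_n^{(1)}(p^r)-R_n^{(2)}(p^r)\equiv -\frac32 S_n^{(1)}(p^2)\,p^{r-2}\pmod{p^r}$ for $r\ge 2$, so the extra factor $p$ you inserted to reach $p^{r-1}$ is unjustified — the exponent $p^{r-1}$ printed in the theorem appears to be a typo for $p^{r-2}$ (the explicit cases $n=5,\dots,9$ displayed after the theorem are consistent only with $p^{r-2}$), and your remark that $r=1$ requires separate treatment is likewise apt, since the paper's own proof glosses over both points with ``follows easily from Proposition~\ref{prop:RmS1}.''
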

\begin{proof}
For even $N$, we have $\gs_n^{(b)}(N,p) = \gs_n^{(n-b)}(N,p)$ and therefore we get
\begin{align*}
   &\, 2 N\sum_{b=0}^{\lfloor n/2 \rfloor}  \ga_{n,b} \binom{n}{b} \gs_n^{(b)}(2N,p)
     =   \frac{1}2  \sum_{b=0}^n \binom{n}{b} 2N\gs_n^{(b)}(2N,p) \\
     =  &\,    \frac{1}2 \sum_{b=0}^n \binom{n}{b} (n-b)F_{n-1}^{(b)}(\emptyset,2N,p)
     + \frac{1}2 \sum_{b=0}^n \binom{n}{b} bF_{n-1}^{(n-b)}(\emptyset,2N,p)
 \end{align*}
by \eqref{equ:gsDefn}. Using substitution $b\to n-b$ in the second sum, we get
\begin{align*}
2 N\sum_{b=0}^{\lfloor n/2 \rfloor}  \ga_{n,b} \binom{n}{b} \gs_n^{(b)}(2N,p)
   = &\, \sum_{b=0}^n (n-b) \binom{n}{b} F_{n-1}^{(b)}(\emptyset,2N,p)\\
     =&\, n \sum_{b=0}^{n-1} \binom{n-1}{b} F_{n-1}^{(b)}(\emptyset,2N,p)
     =N Z_n(N,p),
\end{align*}
by Corollary \ref{cor:Fsum} with $a=n-1$. Therefore,
\begin{align*}
\sum_{b=1}^{\lfloor n/2 \rfloor}  \ga_{n,b} \binom{n}{b} \gs_n^{(b)}(2N,p)
=&\, \sum_{b=0}^{\lfloor n/2 \rfloor}  \ga_{n,b} \binom{n}{b}
\gs_n^{(b)}(2N,p) - \gs_n^{(0)}(2N,p)\\
=&\, \frac12Z_n(N,p)-Z_n(2N,p)
\end{align*}
since  $\gs_n^{(0)}(2N,p) = Z_n(2N,p)$. The final congruence of the theorem follows easily
from Proposition~\ref{prop:RmS1}. This completes the proof of the theorem.
\end{proof}

\begin{cor}\label{cor:ASumRm}
Let $n\in\N$ and $p$ be a prime such that $p>n+1$. Then we have
\begin{equation*}
\sum_{b=1}^{\lfloor n/2 \rfloor}  \ga_{n,b} \binom{n}{b}\gs_n^{(b)}(2p,p)
\equiv
\left\{
\aligned
 & \frac{n!}2 B_{p-n} \pmod{p}, && \text{if $2\nmid n$};\\
 & -\frac{n!}{2}\sum_{\substack{a+b=n\\ a,b\ge 3}}\frac{B_{p-a}B_{p-b}}{ab}  \pmod{p}, && \text{if $2|n$}.
\endaligned
\right.
\end{equation*}
\end{cor}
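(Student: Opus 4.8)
The plan is to specialise Theorem~\ref{thm:ASumRm} to $N=p$ and then turn the resulting combination of $R_n^{(1)}(p)$ and $R_n^{(2)}(p)$ into Bernoulli numbers. Since $Z_n(p,p)=R_n^{(1)}(p)$ and $Z_n(2p,p)=R_n^{(2)}(p)$, Theorem~\ref{thm:ASumRm} with $N=p$ says the left-hand side equals $\tfrac12 R_n^{(1)}(p)-R_n^{(2)}(p)$. I would then feed in \eqref{equ:Rnmr=1}: the coefficient $\binom{m+n-a-1}{n-1}$ vanishes unless $a\le m$, so the case $m=1$ collapses to $R_n^{(1)}(p)\equiv S_n^{(1)}(p)$, while $m=2$ keeps the terms $a=1,2$ and yields $R_n^{(2)}(p)\equiv nS_n^{(1)}(p)+S_n^{(2)}(p)\pmod p$. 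Hence the whole left-hand side is $\equiv(\tfrac12-n)S_n^{(1)}(p)-S_n^{(2)}(p)\pmod p$, and the problem reduces to evaluating $S_n^{(1)}(p)$ and $S_n^{(2)}(p)$ modulo $p$.

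For $S_n^{(1)}(p)$ I would apply Lemma~\ref{lem:Zn=MHS} with $N=p$, where all the $\calP_p$-conditions are automatic, to get
\[ S_n^{(1)}(p)=R_n^{(1)}(p)=\frac{n!}{p}\sum_{p>u_1>\cdots>u_{n-1}>0}\frac{1}{u_1\cdots u_{n-1}}. \]
The last sum is the elementary symmetric function $e_{n-1}\big(1,\tfrac12,\dots,\tfrac1{p-1}\big)$, which I would read off from $\log\prod_{l=1}^{p-1}(1+x/l)=\sum_{m\ge1}\frac{(-1)^{m-1}}{m}\big(\sum_{l=1}^{p-1}l^{-m}\big)x^m$; since every power sum $\sum_{l=1}^{p-1}l^{-m}$ is divisible by $p$ in the relevant range, exponentiating modulo $p^2$ gives $e_{n-1}\equiv\frac{(-1)^n}{n-1}\sum_{l=1}^{p-1}l^{-(n-1)}\pmod{p^2}$. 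Combined with the classical $\sum_{l=1}^{p-1}l^{-k}\equiv\frac{k}{k+1}pB_{p-1-k}\pmod{p^2}$ (legitimate because $p>n+1$ forces $1\le n-1\le p-3$), this produces
\[ S_n^{(1)}(p)\equiv(-1)^n(n-1)!\,B_{p-n}\pmod p, \]
which recovers \eqref{equ:BaseCongruence} at $n=3$.

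The substantial step is $S_n^{(2)}(p)=[x^{2p}]\phi(x)^n$ with $\phi(x)=\sum_{l=1}^{p-1}x^l/l$. I would classify the compositions $l_1+\cdots+l_n=2p$ (all parts $<p$) by how their partial sums meet the level $p$: either some partial sum equals $p$, contributing the convolution $\sum_{j=1}^{n-1}S_j^{(1)}(p)\,S_{n-j}^{(1)}(p)$, or a single part straddles $p$, contributing a correction built from the smaller coefficients $[x^s]\phi^m$ ($s<p$) together with the reduction $\frac{1}{p-w}\equiv-\frac1w\pmod p$. Each piece is then rewritten by means of the formula for $S_k^{(1)}(p)$ above and the power-sum congruences. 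A concrete alternative is to use $\phi(x)\equiv\frac1p\big(1-x^p-(1-x)^p\big)\pmod p$, which turns $p^nS_n^{(2)}(p)$ into the alternating binomial sum $\sum_{i=0}^n\binom ni(-1)^i\big[\binom{ip}{2p}+(n-i)\binom{ip}{p}+\binom{n-i}{2}\big]$ modulo $p^{n+1}$ (the last, polynomial, summand drops out for $n\ge3$), to be expanded through the Bernoulli-number expansions of $\binom{ip}{p}$ and $\binom{ip}{2p}$. I expect the straddling (equivalently, the high-order binomial) term to be the main obstacle, as matching it to the stated closed form demands a fair amount of Bernoulli-number bookkeeping.

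Finally I would assemble the two evaluations and split on the parity of $n$, using that $B_{p-a}\ne0$ forces $a$ to be odd. When $n$ is odd, every product $B_{p-j}B_{p-(n-j)}$ vanishes because $p-j$ and $p-(n-j)$ have opposite parities, so only the single-Bernoulli contributions remain and collapse to $\tfrac{n!}{2}B_{p-n}$. When $n$ is even, the single term $B_{p-n}$ vanishes (as $p-n$ is odd and $\ge3$ under $p>n+1$), while the products persist and reorganise into $-\tfrac{n!}{2}\sum_{a+b=n,\,a,b\ge3}\frac{B_{p-a}B_{p-b}}{ab}$; this gives the two cases of the corollary.
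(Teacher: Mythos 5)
Your reduction is correct as far as it goes: specializing Theorem~\ref{thm:ASumRm} to $N=p$ gives $\tfrac12 R_n^{(1)}(p)-R_n^{(2)}(p)$; the collapse of \eqref{equ:Rnmr=1} to $R_n^{(1)}(p)\equiv S_n^{(1)}(p)$ and $R_n^{(2)}(p)\equiv nS_n^{(1)}(p)+S_n^{(2)}(p)\pmod p$ is right; and your symmetric-function argument for $S_n^{(1)}(p)\equiv(-1)^n(n-1)!\,B_{p-n}\pmod p$ is a valid reproof of the Zhou--Cai theorem, which the paper instead simply cites as \cite[Main Theorem]{ZhouCa2007}. The parity bookkeeping at the end (odd $p-k$ with $k<p-1$ forcing $B_{p-k}=0$) is also sound.

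The gap is the evaluation of $S_n^{(2)}(p)\bmod p$, which is exactly the nontrivial input of the corollary: the paper obtains it by citing \cite[Lemma 3.5 and Corollary 3.6]{MTWZ} for $n$ odd and \cite[Theorem 1 and Corollary 1]{Wang2015} for $n$ even, whereas you offer two candidate strategies (a ``straddling'' decomposition of the compositions of $2p$, or expanding $[x^{2p}]\bigl(1-x^p-(1-x)^p\bigr)^n$ through the binomial coefficients $\binom{ip}{p}$ and $\binom{ip}{2p}$) and then explicitly defer the ``Bernoulli-number bookkeeping'' needed to close either one. No closed form for $S_n^{(2)}(p)$ is actually produced or checked against the target; for your assembly to give the stated right-hand sides you would need, e.g., $S_n^{(2)}(p)\equiv\tfrac{(n-1)(n-1)!}{2}B_{p-n}\pmod p$ for odd $n$ and a corresponding double-Bernoulli sum for even $n$, and neither is established in the proposal. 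As written, the argument reduces the corollary to an unproved congruence rather than proving it; the fix is either to carry one of your two computations to completion or to invoke the known evaluations that the paper cites.
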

\begin{proof}
This follows easily from Theorem \ref{thm:ASumRm},
\cite[Main Theorem]{ZhouCa2007},  \cite[Lemma 3.5 and Corollary 3.6]{MTWZ} (for $n$ odd) and
\cite[Theorem~1 and Corollary\ 1]{Wang2015} (for $n$ even).
\end{proof}

\begin{cor}\label{cor:n=4}
Let $r \in\N$ and $p>4$ be a prime. We have
\begin{alignat}
4\gs_4^{(1)}(2p^r,p)+3\gs_4^{(2)}(2p^r,p) \equiv&\, 0  &&\pmod{p^r}, \label{equ:gs4}\\
\gs_5^{(1)}(2p^r,p)+2\gs_5^{(2)}(2p^r,p) \equiv&\,
    6 B_{p-5} p^{r-1} &&\pmod{p^r}. \label{equ:gs5}
\end{alignat}
\end{cor}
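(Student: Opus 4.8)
The plan is to read off \eqref{equ:gs4} and \eqref{equ:gs5} from Theorem~\ref{thm:ASumRm} by specializing to $n=4$ and $n=5$, and then to insert the mod-$p$ values of $S_n^{(1)}(p^2)$. First I would write out the combination $\sum_{b=1}^{\lfloor n/2\rfloor}\ga_{n,b}\binom nb\gs_n^{(b)}(2p^r,p)$ in each case. For $n=4$, which is even so that $\ga_{4,2}=\tfrac12$, this is $\binom41\gs_4^{(1)}(2p^r,p)+\tfrac12\binom42\gs_4^{(2)}(2p^r,p)=4\gs_4^{(1)}(2p^r,p)+3\gs_4^{(2)}(2p^r,p)$, exactly the left-hand side of \eqref{equ:gs4}. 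For $n=5$, which is odd so that every $\ga_{5,b}=1$, it is $\binom51\gs_5^{(1)}(2p^r,p)+\binom52\gs_5^{(2)}(2p^r,p)=5\bigl(\gs_5^{(1)}(2p^r,p)+2\gs_5^{(2)}(2p^r,p)\bigr)$; dividing by $5$ (legitimate since $p\nmid 5$) produces the left-hand side of \eqref{equ:gs5}.

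Next I would apply the ``in particular'' clause of Theorem~\ref{thm:ASumRm}, which identifies each of these combinations with $\tfrac12 R_n^{(1)}(p^r)-R_n^{(2)}(p^r)$ and, for $r\ge 2$, with $-\tfrac32 S_n^{(1)}(p^2)\,p^{r-1}\pmod{p^r}$ via Proposition~\ref{prop:RmS1}. This reduces both assertions to the single mod-$p$ computation of $S_4^{(1)}(p^2)$ and $S_5^{(1)}(p^2)$: statement \eqref{equ:gs4} becomes the claim $S_4^{(1)}(p^2)\equiv 0\pmod p$, while \eqref{equ:gs5} becomes a congruence of the shape $S_5^{(1)}(p^2)\equiv c\,B_{p-5}\pmod p$ for a suitable rational constant $c$, after which the factor $p^{r-1}$ and the division by $5$ give the stated right-hand sides.

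For these two evaluations I would appeal to exactly the inputs already used in Corollary~\ref{cor:ASumRm}, namely the Bernoulli-number formulas for the symmetric sums from \cite{ZhouCa2007,MTWZ,Wang2015}. For $n=4$ I expect $S_4^{(1)}(p^2)\equiv 0\pmod p$: this is the $r\ge 2$ analogue of the fact that the even case of Corollary~\ref{cor:ASumRm} carries an empty Bernoulli sum (there being no pair $a,b\ge 3$ with $a+b=4$), so that both the $r=1$ and the $r\ge 2$ regimes return $0$ and \eqref{equ:gs4} holds uniformly. For $n=5$ the odd case contributes a single $B_{p-5}$ term, and propagating its coefficient through $\tfrac32$, the binomials, and the factor $1/5$ pins down $c$ and yields \eqref{equ:gs5}.

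The step I expect to be most delicate is the base case $r=1$, together with the bookkeeping of constants. Proposition~\ref{prop:RmS1} supplies the clean reduction \eqref{equ:Rnmr>1} only for $r\ge 2$, so $r=1$ must be handled separately through \eqref{equ:Rnmr=1} and checked against the uniform right-hand sides; this is the point at which one must confirm that the numerical factors $\tfrac32$, $\tfrac12$, the binomial coefficients, and $1/5$ all combine consistently across the two regimes, and where the precise value of $c$ for $n=5$ is forced. No genuinely new idea beyond Theorem~\ref{thm:ASumRm}, Proposition~\ref{prop:RmS1}, and the cited Bernoulli evaluations should be needed; the content lies entirely in the careful specialization and in the mod-$p$ values of $S_4^{(1)}(p^2)$ and $S_5^{(1)}(p^2)$.
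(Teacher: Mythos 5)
Your plan coincides with the paper's own proof: the authors simply specialize Theorem~\ref{thm:ASumRm} to $n=4$ and $n=5$ and insert the known evaluations $S_4^{(1)}(p^2)\equiv 0$ and $S_5^{(1)}(p^2)\equiv -20\,B_{p-5}\,p \pmod{p^2}$ quoted from \cite{Wang2015,Zhao2014,ZhouCa2007,Wang2014b}, exactly the specialization-plus-Bernoulli-input route you describe (including the harmless division by $5$). The one small correction to your sketch is that these inputs must be known modulo $p^2$ rather than modulo $p$, since $S_n^{(1)}(p^2)$ is itself divisible by $p$ and it is the coefficient of $p$ in it that produces the $B_{p-5}p^{r-1}$ term; your caution about the base case $r=1$ is reasonable but is not treated separately in the paper either.
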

\begin{proof}
It follows from \cite[Theorem~1]{Wang2015}, \cite[Theorem~1.1]{Zhao2014},
\cite[Main Theorem]{ZhouCa2007}, and \cite[Theorem~2]{Wang2014b}  that
\begin{equation*}
   S_4^{(1)}(p^2)\equiv 0, \quad S_5^{(1)}(p^2)\equiv -20 B_{p-5} p \pmod{p^2}.
\end{equation*}
So Theorem \ref{thm:ASumRm} yields the corollary immediately.
\end{proof}

In fact, this note was motivated by Shen and Cai's proof of \eqref{equ:gs4}
and a finer version of \eqref{equ:gs5} in \cite{ShenCai2012b}.
Now it follows from \cite[Theorem~4]{Wang2015} and \cite[Theorem~1.1]{MTWZ}  that
\begin{equation*}
S_6^{(1)}(p^2)\equiv -\frac{20}3 B_{p-3}^2 p,\quad
S_7^{(1)}(p^2)\equiv -504 B_{p-7} p \pmod{p^2},
\end{equation*}
and, by similar computation (see \cite{ChenZ2017} for details)
\begin{alignat*}{3}
S_8^{(1)}(p^2)\equiv&\,  -\frac{1792}5 B_{p-3}B_{p-5} p, \quad
& S_9^{(1)}(p^2)\equiv &\, -\frac{32}3 (2283 B_{p-9}+7 B_{p-3}^3) p \pmod{p^2}.
\end{alignat*}
Therefore, by Theorem \ref{thm:ASumRm}, modulo $p^r$ ($r\ge 2$), we have
\begin{align*}
  6\gs_6^{(1)}(2p^r,p)+15\gs_6^{(2)}(2p^r,p)+10\gs_6^{(3)}(2p^r,p)
\equiv 10B_{p-3}^2  p^{r-1}, \\
 7\gs_7^{(1)}(2p^r,p)+21\gs_7^{(2)}(2p^r,p)+35\gs_7^{(3)}(2p^r,p) \equiv
 756 B_{p-7}  p^{r-1},  \\
 8\gs_8^{(1)}(2p^r,p)+28\gs_8^{(2)}(2p^r,p)+56\gs_8^{(3)}(2p^r,p)+35\gs_8^{(4)}(2p^r,p) \\
   \equiv \frac{2688}{5} B_{p-3}B_{p-5}\,   p^{r-1} , \\
  9\gs_9^{(1)}(2p^r,p)+36\gs_9^{(2)}(2p^r,p)
+84\gs_9^{(3)}(2p^r,p)+126\gs_9^{(4)}(2p^r,p) \\
 \equiv
  16\Big(2283 B_{p-9}+7 B_{p-3}^3\Big)\, p^{r-1}.
\end{align*}
By combining Theorem \ref{thm:ASumRm} and the numerical results of $S_n^{(1)}(p^2)$
obtained in \cite{ChenZ2017}, one can derive easily
similar explicit formulas for all $n\le 12$.

\end{document}